\theoremstyle{definition}
\newtheorem{ex}{Example}
\theoremstyle{plain}
\newtheorem{prop}{Proposition}
\newtheorem{cor}{Corollary}
\newcommand{\RR}{\mathbb{R}}
\newcommand{\RRpo}{\mathbb{R}_{\ge 0}}
\newcommand{\NN}{\mathbb{N}}
\def\wset{\mathcal W}
\def\tset{\mathcal T}
\def\states{\mathcal X}
 \newcommand{\low}[1]{{\underline{#1}}}
 \newcommand{\up}[1]{{\overline{#1}}}
\title{Random walks on graphs with interval weights and precise marginals}
\author{Damjan \v{S}kulj \\ University of Ljubljana, Slovenia \\ damjan.skulj@fdv.uni-lj.si}
\begin{document}

\maketitle

\begin{abstract}

We propose a model of random walks on weighted graphs where the weights are interval valued, and connect it to reversible imprecise Markov chains. 
While the theory of imprecise Markov chains is now well established, this is a first attempt to model reversible chains. 
In contrast with the existing theory, the probability models that have to be considered are now non-convex. 
This presents a difficulty in computational sense, since convexity is critical for the existence of efficient optimization algorithms used in the existing models.  
The second part of the paper therefore addresses the computational issues of the model. 
The goal is finding sets of weights which maximize or minimize expectations corresponding to multiple steps transition probabilities.
In particular, we present a local optimization algorithm and numerically test its efficiency. 
We show that its application allows finding close approximations of the globally best solutions in reasonable time. 
	
	
	\medskip
	\centerline{\bfseries Keywords}
	weighted graph, random walk, Markov chain, imprecise Markov chain, reversible Markov chains, local optimization, global optimization
\end{abstract}

\section{Introduction}
\subsection{Modelling uncertainty in Markov chains and weighted graphs}
Markov chains with the property that every sequence of states is equally likely no matter whether the process runs forwards or backwards are said to be \emph{reversible}. Reversible Markov chains are often interpreted and modelled with \textit{random walks} on weighted graphs (\cite{abe2014, aldous-fill-2014, aldous1989lower, Feige1995,  feige1995tight, gobel1974random,  lovasz1993random}) where the states of the chain are the vertices of the graph and transition probabilities are proportional to the weights of the edges incident to the initial vertex. Reversible Markov chains are often used in Monte Carlo methods (\cite{green1995reversible, hastings1970monte, jerrum1996markov}). Random walks on graphs have become very popular in network analysis (\cite{coppersmith1993random, lin2014mean, liu2012probabilistic,  zhang2014effects}), social networks (\cite{ backstrom2011supervised, li2011link, raey, yin2010unified}) and web recommender systems (\cite{fouss2007random}).

Modelling real world phenomena with Markov chains requires estimating a large number of parameters. Even with ever growing amounts of data at disposal this task is often impossible to achieve without serious uncertainty in the estimates. Ignoring this fact and regarding the parameters as precise leads to overprecise unreliable results. 
The need for more robust models for probability has led to various models known under the common name as \emph{theory of imprecise probabilities} (\cite{augustin2014introduction}). In particular, for Markov chains the theory of \emph{imprecise Markov chains} has been developed for discrete (\cite{decooman-2008-a, hart:98, skulj:09}) as well as continuous case (\cite{skulj2015165}). Most of the existing models are based on the theory of lower previsions (\cite{Miranda2008}). 

In the core of the theory of imprecise Markov chains is the idea that transition probabilities at each step are modelled with convex sets of probability distributions rather than single transition probabilities. Equivalently, all relevant probability distributions can then be modelled by non-additive functionals called \emph{coherent lower previsions}, which are defined as lower envelopes of sets of additive functionals. 

Weights in graphs often also reflect some relation between vertices obtained on the basis of imperfect data. One way of expressing the resulting uncertainty is to use intervals instead of precise weights. While being a compelling generalisation, the related optimisation problems seem to be generally hard (\cite{aissi2009min, averbakh2004interval}). Up until now finding minimum spanning tree and shortest paths in graphs with weighted intervals have received a lot attentions, while random walks seem to have not yet been explored. The lack of appropriate models of imprecise Markov chains and apparent high complexity of the general model might be among the reasons for this. The high complexity is also the main reason for our decision to keep our model simple by not allowing weights to vary completely freely within interval bounds, but instead assuming the sum of weights of edges incident to a given vertex to be constant. This could only be efficiently achieved by allowing loops, which then contain the non-allocated weight mass. 

\subsection{Model}
The aim of the present article is to extend the theory of imprecise Markov chains for the case of reversible chains; more specifically, random walks on weighted graphs with interval weights. Interval weights are interpreted as sets containing the precise weights that will actually set the probabilities of transitions. We also assume that weights are not constant in time but rather at every time step an unknown mechanism selects new set of weights, for which, except that they belong to the given intervals, we have no information available. Once the weights at certain time step are selected, transitions are calculated in the usual way. 
In our model we restrict the set of weights by requiring that total sum of weights of edges incident to a vertex is constant and precisely known. This is achieved by assigning the remaining mass to the loops (i.e. edges connecting the same vertices). This restriction will allow an efficient local optimisation for calculation of multiple steps probability bounds. Actually a similar effect is the result of the rate of leaving a state when modelling continuous time Markov chains. Having precisely given marginals while dependencies are imprecise is not that uncommon since usually there is a lot more data available about marginal values than about dependencies.  

In comparison with the existing models of imprecise Markov chains the most important differences are that probability models behind our model are not necessarily convex and that in general they do not satisfy Bellman's principle of optimality (see e. g. \cite{puterman2014markov}). Consequently calculating bounds for multiple steps transition probabilities is a much more computationally intensive task. 

We give the detailed description of the model in Sections~\ref{s-ms} and \ref{s-imc}. 


\subsection{Results}
While our theoretical model is not very different from other models of imprecise Markov chains, there are substantial differences when it comes to computations. We will investigate computing $n$-steps transition probabilities, which are the basis for any analysis with Markov chains. As imprecision is involved, we cannot speak about single precisely given transition probabilities, but rather their lower and upper bounds. Moreover, in the case of imprecise probabilities, bounds for elementary events are not sufficient to specify the corresponding probability models. Therefore we have to consider computing bounds for more general expectations.

The existing models of imprecise Markov chains allow setting transitions from one state to others independently from one another. This ensures convexity of the underlying probability models and possibility to apply Bellman's principle of optimality. These properties then imply existence of a single local and therefore also global optimum, which is found by sequentially maximizing expectations via linear programming. Complexity of the problem thus remains linear in the number of time steps. The problem of finding extremal expectation in our settings becomes considerably more complicated. In general the problem is not convex and neither it satisfies Bellman's principle. Consequently, in general multiple local optima exist, and that backwards induction is not applicable. This means that irreducible dimensionality of the problem grows exponentially with the number of time steps. 

Our main numerical result is a local optimisation algorithm which we propose in  Section~\ref{s-nc}. Given an initial weight function it returns a local optimal solution. Global extrema, though, are still sought by taking various starting points and do local optimisation. As the size of the space of all feasible points is far to big to be tractable by any reasonable computer, we cannot provide a criterion that would definitely ensure that obtained solution is global maximum. But numerical testing shows that in most cases a reasonable approximation of global solution can be obtained by taking a moderate number of  starting points. Even more convincingly it shows that if weight functions were chosen by random, without applying the local optimization, then it would almost certainly take incomparably larger samples to get results comparably close to the optimal solution. While, as far as we are aware, no other algorithms exist for optimization of random walks on graphs with interval weights, we can only compare our method to random choice, which is therefore by far outperformed. 

\section{Model settings}\label{s-ms}
Let $\states$ be a non empty set of \emph{states}. We will usually denote the number of states by $s$. We consider random walks on the graph with vertices $\states$ and weighted edges that are given in the form of an interval \emph{weight function}. The probabilities of transitions between states are assumed to be proportional with the weights. More precisely, if $w\colon  \states^2 \to \RRpo$ is a weight function then 
\begin{equation}\label{eq-tprob}
P_w(x, y) = P_w(x_{n+1} = y|X_n = x) = \frac{w(x, y)}{\sum_{y\in \states} w(x, y)} = \frac{w(x, y)}{W(x)}. 
\end{equation}
In this paper we assume that the denominator in the above fraction is a fixed function of the state. That is, we assume a precise function $W\colon \states\to \RR_+$ as a sum of weights edges incident to a vertex. This restriction will enable us to obtain an efficient optimization algorithm. When modeling uncertainty we often have fairly good information on the long term distributions over the set of states (which are closely related to the corresponding weights) but much less certainty regarding the transition probabilities. We will thus allow weights $w(x, y)$ where $x\neq y$ to vary freely within given intervals and the remaining weight mass will be used to model the loop weight $w(x, x)$. 

Formally, we define a set 
\begin{equation*}
\wset = \left\{  w\colon  \states^2 \to \RRpo \colon \low w(x, y)\le  w(x, y) \le \up w(x, y), \sum_{y\in \states} w(x, y) = W(x) \right\}.
\end{equation*} 
where $\low w$ and $\up w$ are arbitrary such that 
\[ \low w \le \up w \qquad\text{and} \qquad \sum_{\substack{y\in \states\\ y\neq x }}  \up w(x, y) \le W(x). \] 
Every weight function in $\wset$ defines transition probabilities via equation \eqref{eq-tprob}. Our aim is to provide some basic properties of the corresponding Markov chains. 

Additionally, to avoid problems with uniqueness of the invariant distributions, we will assume that for all pairs of states 
\begin{equation}\label{eq-convention1}
\text{either} ~ \up w(x, y) = 0 ~\text{or}~ \low w(x, y)>0
\end{equation}
and that there is a path between every pair of states consisting of edges with strictly positive weights. 


\section{Imprecise Markov chains}\label{s-imc}
\subsection{Transition operators with separately specified rows}
Markov chains whose parameters are only partially known have been studied recently into some details under the name \emph{imprecise Markov chains} (\cite{decooman-2008-a, skulj:09}) or \emph{Markov set chains} (\cite{hart:98}). Here we give basic ideas and notations related to the theory described in \cite{decooman-2008-a}.

An \emph{imprecise Markov chain} is a sequence $(X_n)_{n\in \NN\cup \{0\}}$ of random variables taking values in a finite state space $\states$. The imprecise distribution corresponding to some $X_n$ is given in the form of a set of probability distributions $\mathcal M_n$ consisting of distributions compatible with the given partial knowledge of the process. In the case where $\mathcal M_n$ are convex, they can be equivalently described in terms of lower expectation functionals
\begin{equation}\label{eq-lower-expectation}
\low E_n (f) = \min_{q\in \mathcal M_n} \sum_{x\in \states} q(x) f(x), 
\end{equation} 
where $q\colon \states\to \RR$ are probability mass functions corresponding to distributions in $\mathcal M$ and $f$ an arbitrary real valued map on $\states$.  

The transition law between states is also given in imprecise way. That is by assuming a set of transition operators $\mathcal T$, which is called an \emph{imprecise transition operator}. The following relation then holds 
\begin{equation}
\mathcal M_{n+1} = \mathcal M_n \mathcal T = \{ qT \colon q \in \mathcal M_n, T\in \mathcal T \}. 
\end{equation}
We do not assume transition probabilities being constant in time but only that they belong to the specified set of transition operators. Thus, an imprecise Markov chain is in principle time inhomogeneous with fixed constraints on transition probabilities. 

Moreover, an imprecise transition operator $\mathcal T$ maps some $f\in\RR^\states$ to a set $\mathcal Tf = \{ Tf \colon T\in \mathcal T\}$. An imprecise transition operator $\mathcal T$ is said to have \emph{separately specified rows} if for every $T, T'\in  \mathcal T$ there exists $\tilde T$ so that $\tilde T_{ij} = T_{ij}$ for every $j$ and $i\neq k$ and $\tilde T_{kj} = T'_{kj}$ for every $j$. That is the $k$th row can be chosen independently from the choice of other rows. 

Now we have the following important property. If an imprecise transition operator $\mathcal T$ has separately specified rows then there exist the minimal and maximal elements in the set $\mathcal Tf$ for every $f\in\RR^\states$, denoted by $\low Tf$ and $\up Tf$. The mappings $\low T\colon f\mapsto \low Tf$ and $\up T\colon f\mapsto \up Tf$ are called the lower and the upper transition operators respectively, and their values can be calculated via linear programming. Let  $\low E_0$ be an initial lower expectation operator and $\low T$ a lower transition operator. 
The expectation of $f(X_n)$, where $f\in \RR^\states$ is calculated by repeatedly applying $\low T$ using linear programming and finally apply $\low E_0$, again via linear programming: 
\begin{equation}\label{eq-n-step-expectation}
\low E_n (f) = \low E_0 (\low T^n f).
\end{equation}
The above equation generalizes calculation of $n$-step transition probabilities, since, for instance, 
\[ \low P(X_n = y| X_0 = x) = E_x(\low T^n 1_{\{y\}}) = \low T^n 1_{\{y\}}(x), \]
where $1_A$ denotes the indicator function of the set $A\subseteq \states$. While in the case of precise Markov chains $n$-step transition probabilities between single states completely determine the distribution of $X_n$, in the case of imprecise transitions expectations \eqref{eq-n-step-expectation} must be used instead (see \cite{decooman-2008-a} 
or \cite{skulj:09} for more details). 

\subsection{Transition operators on weighted interval graphs}
The transition operator with respect to a weight function $w$ is a map $T_w\colon \RR^\states\to \RR^\states$ that is defined with 
\begin{equation}\label{eq-trop}
T_wf(x) = \sum_{y\in\states}P_w(x, y) f(y) = \sum_{y\in\states}\frac{w(x, y)}{W(x)} f(y).
\end{equation}
Similarly we can define the action of $T_w$ from the right by
\begin{equation}
qT_w(y)	= \sum_{x\in\states}q(x) P_w(x, y) = \sum_{x\in\states}q(x)\frac{w(x, y)}{W(x)}.
\end{equation}
In particular, if $q$ is a probability mass function corresponding to $X_n$ on the set of states then $qT_w$ is the probability mass function corresponding to $X_{n+1}$. 

We will stick to our general assumption that transition operators are (non-specified) function of time, rather than being constant in time. 
We now extend naturally both operators to vectors of weight functions $\mathbf{w} = (w_1, \ldots, w_n)$ with 
\begin{equation}
T_{\mathbf{w}}f = T_{w_1}\ldots T_{w_n}f
\end{equation}
and 
\begin{equation}
qT_{\mathbf{w}} = qT_{w_1}\ldots T_{w_n}.
\end{equation}
Given a set of weights $\wset$, we define an imprecise transition operator $\mathcal T = \{ T_w \colon w\in \wset \}$. It is clear that so defined imprecise transition operator does not possess the separately specified rows property. In fact there is no way to involve such a property, because of the symmetry of weights, that is the entry $w(x, y)$ which determines transition probability from $x$ to $y$ also determines the reverse transition probability. Yet those two belong to different rows of the corresponding transition matrix, namely, the first one in the row corresponding to $x$ and the second one to the row corresponding to $y$. As a consequence, the notion of an upper or lower transition operators does not make sense here, as there is no unique maximal or minimal elements in the set $\tset f := \{ T_w f\colon T_w\in \tset  \}$. Moreover, while sets of the form $\tset$ or $\tset f$ are convex, this is not any more the case with more general sets, such as $\tset^2 := \{ T_{w_1}T_{w_2}\colon w_1, w_2\in \tset \}$ or $\tset^2f$. This also means that optimization methods based on linear programming that are successfully applied in the theory of imprecise Markov chains cannot be applied on our case. 

An imprecise Markov chain is said to be \emph{regular} if there exists some positive integer $r$ such that all transition operators in $\tset^n$, where $n\ge r$, have all elements positive. According to our convention \eqref{eq-convention1} every state is reachable from any other state, and since loops are also possible with strictly positive probability the chain is acyclic and therefore regular. It follows then (see \cite{2013:skulj-hable}) that there exists the unique invariant set $\mathcal M$ of probability distributions. Assuming fixed marginals, it follows that in our case the marginal distribution $\pi$, where $\pi(x) = \frac{W(x)}{W}$ is the common unique distribution corresponding to all operators in $\mathcal T$. Thus, $\{ \pi \}\mathcal T = \{ \pi \}$, which implies that $\{ \pi \}$ is the unique invariant set of distributions. 


\subsection{Reversibility}
One of the most important properties of Markov chains that can be represented as random walks on graphs is that they are reversible processes. That means that we have equal probability to observe a sequence of states if their order is reversed:
\begin{equation}\label{eq-reversibility-rpr}
P(X_1 = x_1, \ldots, X_n = x_n) = P(X_1 = x_n, \ldots, X_n = x_1), 
\end{equation}
assuming that $(X_1 = x_1) = \pi(x_1)$, where $\pi$ is the unique invariant distribution. Applying the above property to the case of $n=2$ we obtain the \emph{detailed balance condition}: 
\begin{equation}
\pi(x)P(x, y) = \pi(y)P(y, x), 
\end{equation}
where $P(x, y)$ is the transition probability between $x$ and $y$. 

Clearly, a precise random walk on a graph with weight function $w$ satisfies the detailed balance condition, due to the fact that 
\[ P(X_1 = x, X_2 = y) = \frac{w(x, y)}{W} = P(X_1 = y, X_2 = x), \]
where $W$ denotes the sum of weights for all edges. The symmetry of interval weights also clearly implies that the lower probabilities $\low P(X_1 = x, X_2 = y)$ and $\low  P(X_1 = y, X_2 = x)$ are the same and equal to $\dfrac{\low w(x, y)}{W}$. 

Similarly we can calculate probabilities of the elementary events for more consecutive steps. Thus, for instance
\begin{equation}\label{eq-lpr-seq}
\low P(X_1 = x_1, \ldots, X_n = x_n) = \frac{\prod_{i = 1}^{n-1}\low w(x_i, x_{i+1})}{W\prod_{j=2}^{n-1}W(x_j)}	, 
\end{equation}
which again is equal to the lower probability of the reversed sequence of states. 

\section{Numerical calculations}\label{s-nc}
\subsection{Calculating expectations bounds}\label{ss-opt}
Although we have a very simple expression \eqref{eq-lpr-seq} that allows calculating the lower probabilities of sequences of states it cannot be used directly to find lower (or upper) probabilities of more general events. An example would be calculating the 2-step lower probability of transition from $x$ to $y$. In the precise case knowing the probabilities of all chains of states of length 3 would allow calculating such probability:
\[ P(X_2 = y| X_0 = x) = \sum_{z\in\states} P(X_2 = y|X_1 = z)P(X_1=z|X_0=x). \]
However, the above formula is incorrect if lower probability $\low P$ is taken instead of $P$. The reason is that lower probabilities are in general \emph{non-additive}. In our case, for instance, the lower probability $\low P(x, y) = P_w(x, y)$ for some particular weight function $w$, but $\low P(x, y') = P_{w'}(x, y')$ for another weight function $w'$, and there is usually no weight function that would induce the lower probabilities simultaneously. 

What we need to find in the case of minimizing the 2-step transition probabilities is 
\begin{equation}
\low P(X_2 = y | X_1 = x) = \min_{w_1, w_2\in \wset} \sum_{z\in\states} P_{w_1}(x, z)P_{w_2}(z, y).
\end{equation}
A more general problem is to find the bounds for the expectation of some function $f(X_n)$ given the information that $X_0 = x$, or some probability distribution of $X_0$ over the set of states. But even if this seems like an unnecessarily more complex problem, we would in fact not gain much in terms of simplicity by restricting to $n$-step transition probabilities alone. 


A (precise) probability distribution over $\states$ can be described via probability mass function (pmf) $q\colon \states \to \RRpo$ where $q(x) = P(X_0 = x)$. The expectation of some $f\in \RR^\states$ with respect to $q$ is the scalar product
\begin{equation}
\langle q, f \rangle := E_q(f) = \sum_{x\in\states} q(x) f(x) .
\end{equation}
To extend the above formulation for more general case of the expectation after $n$ steps, we consider the vector of weight functions $\mathbf{w} = (w_1, \ldots, w_n)$. Thus at $k$-th step the weight function $w_k$ is assumed to induce the transitions. We will then denote
\begin{equation}
\langle q, f \rangle^n_{\mathbf{w}}	:= \langle q, T_{\mathbf{w}} f \rangle. 
\end{equation}
Our goal is to find 
\begin{equation}\label{eq-general-bounds}
\low{\langle q, f \rangle}^n = \min_{\mathbf{w}\in \wset^n}\langle q, f \rangle^n_\mathbf{w} \qquad \text{and}\qquad  \up{\langle q, f \rangle}^n = \max_{\mathbf{w}\in \wset^n}\langle q, f \rangle^n_\mathbf{w}.
\end{equation}
Since, clearly, $\up{\langle q, f \rangle}^n = -\low{\langle q, -f \rangle}^n$, we only need to consider the minimization version. Allowing arbitrary real valued functions $q$ instead of restricting to probability mass functions does not change anything in the sense of problem complexity. Therefore we will from now on assume $q$ and $f$ to be arbitrary real valued functions. 
\begin{prop}
	Let $f$ and $q\in \RR^\states$ and $\mathbf{w}\in \wset^n$. Then $\langle qT_\mathbf{w}, f \rangle = \langle q, T_\mathbf{w}f \rangle$. 
\end{prop}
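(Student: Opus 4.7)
The plan is to establish the identity first for a single weight function $n=1$ by a direct calculation swapping the order of summation, and then extend to general $n$ by induction on the length of the weight vector.

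For the base case, I would expand both sides using the definitions already provided in the excerpt. On the left,
\[
\langle qT_w, f\rangle = \sum_{y\in\states}\Bigl(\sum_{x\in\states} q(x)\frac{w(x,y)}{W(x)}\Bigr) f(y),
\]
while on the right,
\[
\langle q, T_w f\rangle = \sum_{x\in\states} q(x)\Bigl(\sum_{y\in\states}\frac{w(x,y)}{W(x)} f(y)\Bigr).
\]
Both expressions are finite double sums of the same summand $q(x)\,w(x,y)\,f(y)/W(x)$ over the finite index set $\states\times\states$, so they coincide by Fubini (equivalently, by linearity and reordering of finite sums). Note that we only need $q$, $f$ to be real-valued functions on $\states$ and do not require positivity or normalisation, which matches the generality stated in the proposition.

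For the inductive step, suppose the claim holds for every vector of length $n-1$, and let $\mathbf{w}=(w_1,\ldots,w_n)$. Writing $\mathbf{w}'=(w_2,\ldots,w_n)$ and applying the definitions of the extended operators, I would regroup
\[
\langle qT_\mathbf{w}, f\rangle = \langle (qT_{w_1})T_{\mathbf{w}'}, f\rangle.
\]
Putting $q':=qT_{w_1}\in\RR^\states$ and invoking the inductive hypothesis on $(q',\mathbf{w}')$ gives $\langle q'T_{\mathbf{w}'}, f\rangle = \langle q', T_{\mathbf{w}'}f\rangle$. Finally, one more application of the base case to the pair $(q, w_1)$ with the function $T_{\mathbf{w}'}f\in\RR^\states$ yields $\langle qT_{w_1}, T_{\mathbf{w}'}f\rangle = \langle q, T_{w_1}T_{\mathbf{w}'}f\rangle = \langle q, T_\mathbf{w}f\rangle$, closing the induction.

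There is no real obstacle here; the statement is essentially the assertion that the right action of $T_w$ on functions $q$ is the adjoint of the left action on $f$ with respect to the unweighted pairing $\langle\cdot,\cdot\rangle$, and this is immediate from the definitions. The only point worth flagging is that one should be mindful to use the general definition of $\langle\cdot,\cdot\rangle$ on $\RR^\states\times\RR^\states$ rather than restricting $q$ to a probability mass function, which is exactly the generality the authors assume just before stating the proposition.
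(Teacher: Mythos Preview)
Your proof is correct and follows essentially the same approach as the paper: induction on the length of $\mathbf{w}$, with the key step being a Fubini-type interchange of the two finite sums in the single-weight case. The only cosmetic difference is that the paper takes $n=0$ as the (trivial) base case and carries out the sum-swapping calculation inside the inductive step, whereas you take $n=1$ as the base case and then invoke it explicitly during the induction; the content is identical.
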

\begin{proof}
	We proceed by induction on the length $n$ of vector $\mathbf{w}$, where the case $n=0$ is trivial. Take now 
	\begin{align*}
	\langle qT_\mathbf{w}, f \rangle & = \langle qT_{w_1}, T_{(w_2, \ldots, w_{n})}f \rangle 
	\intertext{denote $\tilde f = T_{(w_2, \ldots, w_{n})}f$ and continue with }
	& = \sum_{y\in\states} qT_{w_1}(y) \tilde f(y) \\
	& = \sum_{y\in\states} \sum_{x\in\states} q(x)\frac{w_1(x, y)}{W(x)} \tilde f(y) \\
	& = \sum_{x\in\states} q(x) \sum_{y\in\states} \frac{w_1(x, y)}{W(x)} \tilde f(y) \\
	& = \langle q, T_{w_1} \tilde f \rangle  = \langle q, T_{\mathbf{w}} f \rangle.		 
	\end{align*}	
\end{proof}
Clearly the following holds. 
\begin{prop}
	The mapping $(q, w, f)\mapsto \langle q, f \rangle_w^1$ is linear in all variables. 
\end{prop}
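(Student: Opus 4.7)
The plan is to use the previous proposition to write $\langle q, f \rangle_w^1 = \langle q, T_w f\rangle$ as an explicit triple sum, and then read off linearity in each argument separately.

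More precisely, unfolding the definition of $T_w$ from \eqref{eq-trop} gives
\begin{equation*}
\langle q, f \rangle_w^1 = \sum_{x\in\states}\sum_{y\in\states} q(x)\,\frac{w(x,y)}{W(x)}\, f(y).
\end{equation*}
This is the key formula. I would now fix any two of the three arguments and vary the third. For $q$ and $f$, linearity is immediate from the bilinearity of sums, since the coefficients $w(x,y)/W(x)$ are treated as constants. For $w$, the subtle point is to observe that $W\colon \states\to\RR_+$ is a \emph{fixed precise} function that was specified in Section~\ref{s-ms} as part of the model, and does \emph{not} depend on $w$. Hence for fixed $q$ and $f$, the coefficient of $w(x,y)$ in the above sum is the constant $q(x)f(y)/W(x)$, and linearity in $w$ follows.

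The only mild obstacle worth flagging is this point about $W(x)$: if one mistakenly read $W(x)$ as $\sum_{y} w(x,y)$ (which is only forced by the constraints defining $\wset$, not by the definition of the map), the expression would not be linear in $w$. The proposition is implicitly extending the map to arbitrary $w\colon\states^2\to\RR$ using the prescribed $W$ as the denominator, so the distinction matters. Once this is noted, no further calculation is required.
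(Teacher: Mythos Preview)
The paper does not actually give a proof of this proposition: it is stated with the preamble ``Clearly the following holds'' and no further argument. Your expansion of $\langle q, f\rangle_w^1$ into the double sum and the observation that $W$ is a fixed function of the model (so that the coefficient of each $w(x,y)$ is constant) is exactly the intended justification, and is correct.
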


The first step to finding bounds \eqref{eq-general-bounds} is to set $\mathbf{w} = (w)$, that is to take a single step, and find the weight function that makes the expectation $\langle q, f \rangle_w^1$ extremal. 
\begin{prop}[Optimality principle]\label{pr-op}
	Let $q, f\colon \states\to \RR$ and $w$ be some weight function. Set $h(x) = q(x)/W(x)$ for every $x\in \states$ and define 
	\[ \psi_{h, f}(x, y) = (h(x)-h(y))(f(y)-f(x)). \] 
	Then $\langle q, f \rangle_w^1 = \low{\langle q, f \rangle}^1$ if 
	\[ 
	w(x, y) = \begin{cases}
	\low w(x, y) & \text{if } \psi_{h, f}(x, y) > 0; \\
	\up w(x, y) & \text{if } \psi_{h, f}(x, y) \le 0.
	\end{cases}
	\]
\end{prop}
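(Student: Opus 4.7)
The plan is to rewrite the linear functional $\langle q,f\rangle_w^1$ as an affine function of the off-diagonal weights $w(x,y)$, $x\neq y$, so that the constrained minimization collapses to choosing each such $w(x,y)$ at one of its two interval endpoints. The weights of a graph are symmetric, so throughout I will use $w(x,y)=w(y,x)$, and the marginal constraint $\sum_{y\in\states} w(x,y)=W(x)$ will be used to eliminate the loop weights $w(x,x)$.

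Concretely, writing $h(x)=q(x)/W(x)$, I would expand
\[
\langle q,f\rangle_w^1 = \sum_{x,y\in\states} h(x)\,w(x,y)\,f(y)
\]
and split the sum into the diagonal ($x=y$) and off-diagonal ($x\neq y$) parts. Substituting $w(x,x)=W(x)-\sum_{y\neq x} w(x,y)$ into the diagonal gives the $w$-independent constant $\sum_x h(x)f(x)W(x)$ plus $-\sum_{x\neq y} h(x)f(x)\,w(x,y)$. Pairing each unordered $\{x,y\}$, $x\neq y$, and using symmetry, the coefficient of $w(x,y)$ becomes
\[
\bigl[h(x)f(y)+h(y)f(x)\bigr] - \bigl[h(x)f(x)+h(y)f(y)\bigr] = (h(x)-h(y))(f(y)-f(x)) = \psi_{h,f}(x,y),
\]
so that
\[
\langle q,f\rangle_w^1 = \sum_{x\in\states} h(x)f(x)W(x) + \sum_{\{x,y\}:\,x\neq y} \psi_{h,f}(x,y)\,w(x,y).
\]

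Once the objective is in this form, the only remaining constraints on the off-diagonal weights are the independent box bounds $\low w(x,y)\le w(x,y)\le \up w(x,y)$, so the minimum is attained by setting $w(x,y)=\low w(x,y)$ when $\psi_{h,f}(x,y)>0$ and $w(x,y)=\up w(x,y)$ when $\psi_{h,f}(x,y)\le 0$, exactly as in the statement. The one piece that still needs checking is feasibility in $\wset$: the implicitly defined loop weights $w(x,x)=W(x)-\sum_{y\neq x} w(x,y)$ must be nonnegative, and this follows immediately from the standing assumption $\sum_{y\neq x}\up w(x,y)\le W(x)$, since the chosen off-diagonal weights are bounded above by $\up w$. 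I expect the only real obstacle to be the symmetric bookkeeping when pairing $(x,y)$ with $(y,x)$ to collect the coefficient of $w(x,y)$; everything else is a direct computation and an invocation of the endpoint assumption.
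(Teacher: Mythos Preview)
Your argument is correct and in fact slightly cleaner than the paper's. The paper proves the result by a local perturbation and contradiction: assuming $w$ is a minimizer with $\psi_{h,f}(x_0,y_0)>0$ but $w(x_0,y_0)>\low w(x_0,y_0)$, it shifts mass $d$ from the edge $\{x_0,y_0\}$ to the two loops and computes
\[
\langle q,f\rangle_{w'}^1=\langle q,f\rangle_{w}^1-d\,\psi_{h,f}(x_0,y_0)<\langle q,f\rangle_{w}^1,
\]
a contradiction. You instead expand the objective globally as
\[
\langle q,f\rangle_w^1=\sum_{x}h(x)f(x)W(x)+\sum_{\{x,y\}:x\neq y}\psi_{h,f}(x,y)\,w(x,y),
\]
which exhibits the whole functional as affine in the independent box-constrained variables $w(x,y)$, so the minimizer is read off directly. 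The paper's perturbation formula is exactly the increment of your expansion along one coordinate, so the algebraic core is identical; your route has the advantage of making explicit why the off-diagonal weights decouple once the loops absorb the marginal constraint, and of handling the feasibility of the loop weights in one line via $\sum_{y\neq x}\up w(x,y)\le W(x)$, which the paper leaves implicit.
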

\begin{proof}
	We prove the proposition by contradiction. Suppose that $\psi_{h, f}(x_0, y_0) > 0$ but $w(x_0, y_0)> \low w(x_0, y_0)$. Let $0 < d\le w(x_0, y_0) - \low w(x_0, y_0)$ and set 
	\[
	w'(x, y) = \begin{cases}
	w(x, y) & \text{if } (x, y) \not\in \{ (x_0, y_0), (x_0, x_0), (y_0, y_0), (y_0, x_0) \}; \\
	w(x, y) - d & \text{if } (x, y) \in \{ (x_0, y_0), (y_0, x_0) \}; \\
	w(x, y) + d & \text{if } (x, y) \in \{ (x_0, x_0), (y_0, y_0) \} .
	\end{cases}
	\]
	We have that 
	\begin{align*}
	\langle q, f \rangle_{w'}^1 & = \sum_{x\in\states} q(x) \sum_{y\in\states} \frac{w'(x, y)}{W(x)} \\
	& = \sum_{x\in\states} \sum_{y\in\states} w'(x, y) h(x) f(y) \\
	& = \langle q, f \rangle_{w}^1 + d (h(x_0)f(x_0) + h(y_0)f(y_0) - h(x_0)f(y_0) - h(y_0)f(x_0) ) \\
	& = \langle q, f \rangle_{w}^1 - d \psi_{h, f}(x_0, y_0) \\
	& < \langle q, f \rangle_{w}^1,
	\end{align*}
	which contradicts minimality of $\langle q, f \rangle_{w}^1$. 	
	
	The case where $\psi_{h, f}(x_0, y_0)<0$ is proved similarly.
\end{proof}

\begin{cor}\label{cor-ext-char}
	Let $q, f\in \RR^\states$ be arbitrary mappings. Then 
	\begin{equation}\label{eq-max-sp}
	\min_{\mathbf{w}\in \wset^n} \langle q, f \rangle^n_\mathbf{w} 
	\end{equation}
	is attained in a vector $\mathbf{w}=(w_1, \ldots, w_n)$, where all $w_i$ are extremal. 
\end{cor}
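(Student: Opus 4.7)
The strategy is a straightforward reduction to the single-step Optimality Principle (Proposition \ref{pr-op}), applied coordinate by coordinate. First I would note that $\wset$ is a compact subset of $\RR^{\states\times\states}$ (it is the intersection of a box with an affine subspace), so $\wset^n$ is compact; since $\mathbf{w}\mapsto \langle q, f \rangle^n_\mathbf{w}$ is continuous (indeed multilinear in the $w_i$), the minimum in \eqref{eq-max-sp} is attained. Pick any minimizer $\mathbf{w}^\star=(w_1^\star,\ldots,w_n^\star)$.

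The plan is to convert $\mathbf{w}^\star$ into an all-extremal minimizer by replacing one coordinate at a time. For $i=1,\ldots,n$, suppose we have already produced extremal $w_1',\ldots,w_{i-1}'$ and kept $w_i^\star,\ldots,w_n^\star$ with the overall value still equal to the minimum. Using the first proposition of this section, write
\[
\langle q, f\rangle^n_{(w_1',\ldots,w_{i-1}',w_i,w_{i+1}^\star,\ldots,w_n^\star)} = \langle q_i, f_i\rangle^1_{w_i},
\]
where $q_i = q\,T_{w_1'}\cdots T_{w_{i-1}'}$ and $f_i = T_{w_{i+1}^\star}\cdots T_{w_n^\star}f$. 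Because the full tuple is optimal, $w_i^\star$ must minimize $\langle q_i, f_i\rangle^1_{w_i}$ over $w_i\in\wset$. By Proposition \ref{pr-op}, applied to the pair $(q_i, f_i)$, the explicit choice
\[
w_i'(x,y) = \begin{cases} \low w(x,y) & \text{if } \psi_{h_i, f_i}(x,y) > 0, \\ \up w(x,y) & \text{if } \psi_{h_i, f_i}(x,y) \le 0, \end{cases}
\]
with $h_i(x)=q_i(x)/W(x)$, is an extremal weight function that also attains this single-step minimum. Replacing $w_i^\star$ by $w_i'$ therefore preserves the overall minimum value and leaves the earlier (already extremal) coordinates untouched.

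Iterating this substitution for $i=1,\ldots,n$ yields a vector $\mathbf{w}'=(w_1',\ldots,w_n')$ with every $w_i'$ extremal and $\langle q, f\rangle^n_{\mathbf{w}'}$ still equal to the minimum, which is exactly the claim.

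The only subtle point is conceptual rather than technical: Proposition \ref{pr-op} is stated as a sufficient condition for a single $w$ to achieve the one-step minimum, not as an ``only if'' characterization, so in general the set of minimizers need not be extremal. The argument above sidesteps this by not trying to modify a given minimizer minimally; instead, at each stage we throw away $w_i^\star$ entirely and install the canonical extremal minimizer supplied by Proposition \ref{pr-op}, which is always available. The sequential nature of the replacement (using already-updated $w_j'$ for $j<i$ when defining $q_i$) is what guarantees that earlier extremal coordinates survive to the end.
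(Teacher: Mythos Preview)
Your proof is correct and follows essentially the same approach as the paper's own proof: start from a minimizer and replace each coordinate in turn by the extremal single-step minimizer supplied by Proposition~\ref{pr-op}. You are in fact slightly more careful than the paper, both in invoking compactness to ensure the minimum is attained and in making explicit that the replacements proceed sequentially (using the already-updated coordinates on the left when defining $q_i$), a point the paper leaves implicit.
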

\begin{proof}
	Let $\mathbf{w} = (w_1, \ldots, w_n)$ be a vector minimizing \eqref{eq-max-sp}. We will show that then there exists a vector $\mathbf{w}'$  with extremal components such that $ \langle q, f \rangle^n_{\mathbf{w}'} \ge  \langle q, f \rangle^n_\mathbf{w}$. 
	
	Suppose that $i$th component $w_i$ of $\mathbf{w}$ is not extremal. Then set $\tilde q = qT_{(w_1, \ldots w_{i-1})}$ and $\tilde f = T_{(w_{i+1}, \ldots w_{n})}$. It easily follows from Proposition~\ref{pr-op} that the expression $\langle \tilde q, \tilde f \rangle_w^1$ is minimized by an extremal weight function $w$. Thus $\langle \tilde q, \tilde f \rangle_w^1 \ge \langle \tilde q, \tilde f \rangle_{w_i}^1 = \langle q, f \rangle^n_\mathbf{w}$. Thus replacing $w_i$ with $w$ in $\mathbf{w}$ would still maximize \eqref{eq-max-sp}. By repeating the same argument for all components with non-extremal weight functions, we confirm the corollary. 
\end{proof}

\subsection{The structure of the set of extremal weights}\label{ss-struct-extremal}
The problem of calculating the extreme value and the optimal weights vector in the expression  \eqref{eq-max-sp} is a high dimensional optimization problem. We need to find the optimal value of an $nm$ dimensional real-valued vector where $n$ stands for the number of time steps and $m$ for the number of edges of the graph. Knowing that all feasible vectors of weights where the extremal values may be found is restricted to the extremal weight vectors brings the number of the values to be considered down to the set of all $nm$ dimensional binary vectors. That gives $2^{mn}$ possibilities, which is in general still far more than a number tractable on any computer system. Moreover, it is not obvious how to endow this space with such a metric or norm structure that would help with the optimization. Therefore the aim of the rest of this paper is to give a heuristic approach to finding reasonable approximations of the target extreme values and insights into the practical complexity of the problem, which is apparently too hard to be tackled exactly. 

Yet there is an additional reduction of the complexity, which follows from the theoretical results in Section~\ref{ss-opt}. Every component $w_i$ of an extremal weight vector $\mathbf{w}$ must be of the form $w_\psi$, where $\psi$ is some function of the form $\psi_{h, f}$. In fact the sign of $\psi_{h, f}$ only matters. Clearly the sign of $\psi_{h, f}$ depends on the ordering of states induced by the maps $h$ and $f$. In other words, every pair of orderings of states induces a feasible extremal weight function. That is, for every component we need to consider at most $(s!)^2$ extremal weight functions instead of $2^{m}$ of all possible extremal weight functions, where $s$ denotes the number of states and $m$ the number of edges in the graph. The number of feasible weights additionally decreases when taking into account the orderings of $q$ and $f$, but still remains much too large to allow any kind of exhaustive examination. Therefore a reasonable approach is to start with some initial weight vector and try to improve it repeatedly until finding an optimal solution. Though, as we show in the continuation this process does not necessarily lead to a global optimum. 

\subsection{Finding local extrema}
%

Corollary~\ref{cor-ext-char} gives a necessary condition for $\mathbf{w}$ to give extremal value of $\langle q, f \rangle^n_\mathbf{w}$. However, there may be several weight vectors satisfying this condition, yielding different values. That is, we may have multiple local extrema due to non-convex nature of the problem. Let us illustrate this with an example. 
\begin{ex}
	Let $\states = \{ 1, 2 \}$, 
	\[ \low w = \begin{bmatrix}
	0.1 & 0.2 \\
	0.2 & 0.1 
	\end{bmatrix}, \up w = \begin{bmatrix}
	0.8 & 0.9 \\
	0.9 & 0.8 
	\end{bmatrix}, \]
	$q = (1, 0)$, and $f = (0, 1)$. The marginal weight function is then $W = (1, 1)$. Our goal is to minimize $qT_{w_1}T_{w_2}f$, which in our case corresponds to the lower two step transition probability $\low P(X_2 = 2|X_0 = 1)$.

	Take
	\[
	w = \begin{bmatrix}
	0.1 & 0.9 \\
	0.9 & 0.1 
	\end{bmatrix}, w' = \begin{bmatrix}
	0.8 & 0.2 \\
	0.2 & 0.8 
	\end{bmatrix},
	\]
	which are the only extremal weight functions. The transition operators corresponding to $w$ and $w'$ coincide with $w$ and $w'$ because of both marginal weights being equal to 1. 
	
	Now we have that $f_2 := T_{w}f = (0.9, 0.1)$ and $f'_2 := T_{w'}f = (0.2, 0.8)$. Hence 
	\[
	\mathrm{sign}(\psi_{q, f_2}) = \begin{bmatrix}
	0 & 1 \\
	1 & 0 
	\end{bmatrix} \qquad \text{and} \qquad 
	\mathrm{sign}(\psi_{q, f'_2}) = \begin{bmatrix}
	0 & 0 \\
	0 & 0 
	\end{bmatrix}
	\]
	Hence the expression $qT_{w_1}f_2$ is minimized by taking $w_1 = w$ and similarly the expression $qT_{w'_1}f'_2$ is minimized by taking $w'_1 = w'$. 
	
	To make things clearer we will parametrize all two-step weight vectors. Let 
	\[
	w_1(\alpha) = \alpha w + (1-\alpha) w' \qquad\text{and} \qquad w_2(\beta) = \beta w + (1-\beta) w'.
	\]
	Now we can express explicitly 
	\[ F(\alpha, \beta) = qT_{w_1(\alpha)}T_{w_2(\beta)}f = 0.32 + 0.42 \alpha + 0.42\beta - 0.98 \alpha \beta, \]
	whose local extrema in $[0, 1]^2$ are $F(0, 0) = 0.32$ and $F(1, 1)=0.18$ which are local minima while in the local maxima in $(0, 1)$ and $(1, 0)$ the same value 0.74 is attained. The only stationary point in the interior of $[0, 1]^2$ is a saddle point and thus not a local extreme. 	
\end{ex}
Having characterized local extrema, we can now provide a simple Algorithm~\ref{alg-lext} that finds local extrema from some starting weight vector $\mathbf{w}$. 
\begin{algorithm}
	\caption{FindLocalMinimum}\label{alg-lext}
	\begin{algorithmic}[1]
		\Function{LocalMinimum}{$q, initialWeightVector, f$} 
		\State	$weightVector \gets initialWeightVector$
		\Repeat
		\State $k \gets setSplitPoint$   \Comment{set the division point} \label{alg-code-set-split}
		\State $wl \gets weightVector(1:k-1)$ \Comment{left part of the weight vector}
		\State $wr \gets weightVector(k+1:n)$ \Comment{right part of the weight vector}
		\State $ql \gets qT_{wl}$
		\State $fr \gets T_{wr}f$
		\State $wnew \gets w_{\psi_{ql, fr}}$ \Comment{set the weight function $w$ that minimizes $qlT_wfr$}
		\State $weightVector \gets (wl, wnew, wr)$
		\Until there are no more division points where $qlTw_kfr$ is not minimal
		\EndFunction
	\end{algorithmic}
\end{algorithm}
We have left the way how the split point is selected open on purpose in Algorithm~\ref{alg-lext}, ~line~\ref{alg-code-set-split}, because there are many ways how we can proceed with this. Thus, we can for instance start from left to right and repeat the process until we find no more locally non-optimal weight functions. Or we might go the other way around. The order does affect the results. That is, not only that the number of iterations needed may be different, but also the resulting locally optimal weights vector $\mathbf{w}$ may be different. 

\subsection{Testing the local algorithm}
It takes just a slight modification of Algorithm~\ref{alg-lext} to find local maxima instead of minima. To understand its efficiency for finding global extrema, we implemented some numerical testing with random interval weights in graphs of various sizes to answer the following relevant questions:
\begin{enumerate}
	\item How many unique local extrema does typical problem of the form \eqref{eq-general-bounds} have?
	\item How does the order of split points affect the resulting local extreme?
	\item Can the value of $\langle q, f \rangle_{w_\text{initial}}^n$ in any way predict the value in the resulting local extreme?
	\item Does the value in a local extreme affect the likelihood of the algorithm resulting in that extreme?
\end{enumerate}
We have tested the algorithm on graphs with 4, 6 and 8 vertices. Lower weights $\low w(x, y)$ were randomly generated using exponential distribution with expected value $0.8$ and the upper bounds were generated as $\up w(x, y) = \low w(x, y)X$ where $X$ were random numbers distributed exponentially with the expected value 1. Entries of both $q$ and $f$ were generated as random numbers distributed exponentially with expected value 1.5. Each graph contained about 1/4 of pairs of vertices that were not connected. 

For each set of parameters, i.e. $\low w, \up w, q$ and $f$, a sample of 1500 extremal weight vectors was generated  using random permutations (see the last part of Section~\ref{ss-struct-extremal}) and then calculated the corresponding local extreme using Algorithm~\ref{alg-lext}. We tested for random walks of lengths 2, 4 and 6 respectively. For each size of graphs and random walk length a sample of 200 sets of parameters was generated. 

Typical distribution of the number of (discovered) extreme points is best modelled with exponential distribution with the means that depend on the number of vertices and time steps. The parameters are listed in Table~\ref{tb-lextrema}.
\begin{table}
	\centering
	\begin{tabular}{rr|rrr}
		\toprule 
		& & \multicolumn{3}{|c}{time steps} \\ 
		& & 2 & 4 & 6 \\ \hline 
		\multirow{3}{*}{vertices} & 4&  1.9 & 13.4 & 80.6 \\
		& 6 & 3.2 & 46.8 & 251.2 \\
		& 8 & 5.3 & 100.3 & 411.4 \\
		\bottomrule
	\end{tabular}		
	\caption{Average number of local extrema.}\label{tb-lextrema}
\end{table}
Further we tested the influence of the order in which non-optimal weight functions are selected and optimized in Algorithm~\ref{alg-lext},~line~\ref{alg-code-set-split}. We tested left-to-right and right-to-left order. It turns out that most of the time the resulting local extrema do not coincide. However, the comparison of overall frequencies of the obtained local extrema do not show any systematical differences in their distributions. In Figure~\ref{fig:L-R-comparison} frequencies of local minima and maxima respectively are depicted for the left-to-right and right-to-left orders. One can observe similar distributions for both orders. 
\begin{figure}[h!]
	\centering
	\includegraphics[scale=0.7]{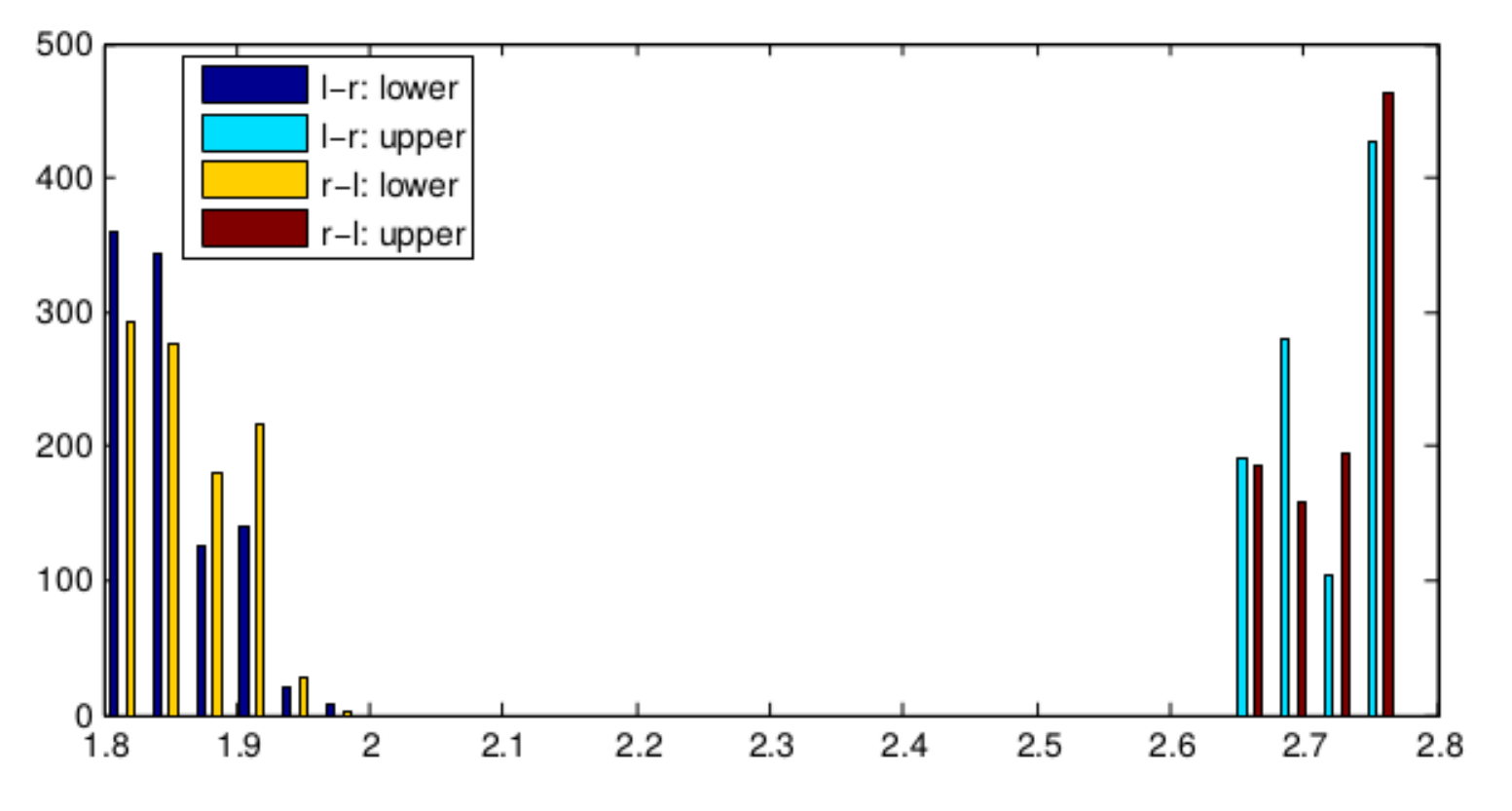}
	\caption{Frequencies of left-to-right and right-to-left order in Algorithm~\ref{alg-lext} for lower and upper bounds.}
	\label{fig:L-R-comparison}
\end{figure}

One might expect that the starting weight function can give some information about the local extreme it leads to. It could be for instance that low value in the starting point predicts lower value in the resulting local minimum, which could be used to do preliminary selection of the starting points. However, empirical results show no significant correlation between the two values. A graph showing dependency between the initial and optimized value is shown in Figure~\ref{fig:initial-optimized-plot}. It clearly shows that the initial value does not reveal much information about the corresponding local maximum. Although the efficiency of the algorithm can be clearly observed by comparing the initial and optimized values. 
\begin{figure}[h!]
	\centering
	\includegraphics[scale=0.5]{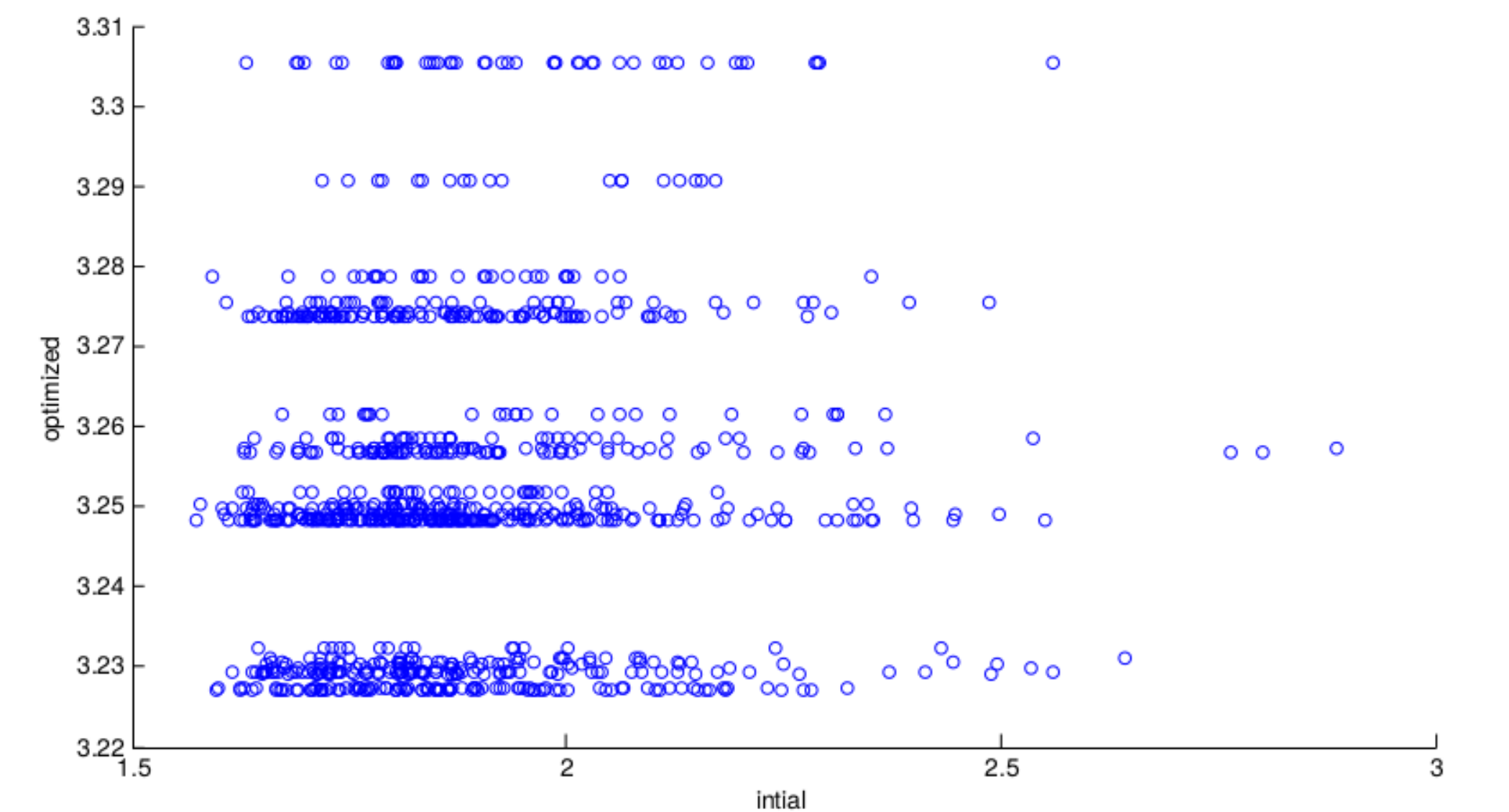}
	\caption{Comparison of the initial value of the maximizing function and the value after applying the local optimization algorithm.}
	\label{fig:initial-optimized-plot}
\end{figure}
Experimental results neither show any particular general pattern that would suggest that the value of a local extreme would impact the probability to be attained by the local optimization algorithm. 

\subsection{From local to global extrema}
So far the best way to find global extrema for random walks in weighted graphs is to take some sample of random initial weight vectors, do the local optimization, and hope that one of the so obtained local extrema is a global extreme. This method, however, does not contain any decisive criterion whether the best obtained solution is globally best solution. It is therefore not clear how big sample one has to take in order to get an estimate reasonably close to the true optimal solution with reasonable certainty. In most cases not too large samples are needed for this. 
\begin{figure}[h]
	\centering
	\includegraphics[scale=0.7]{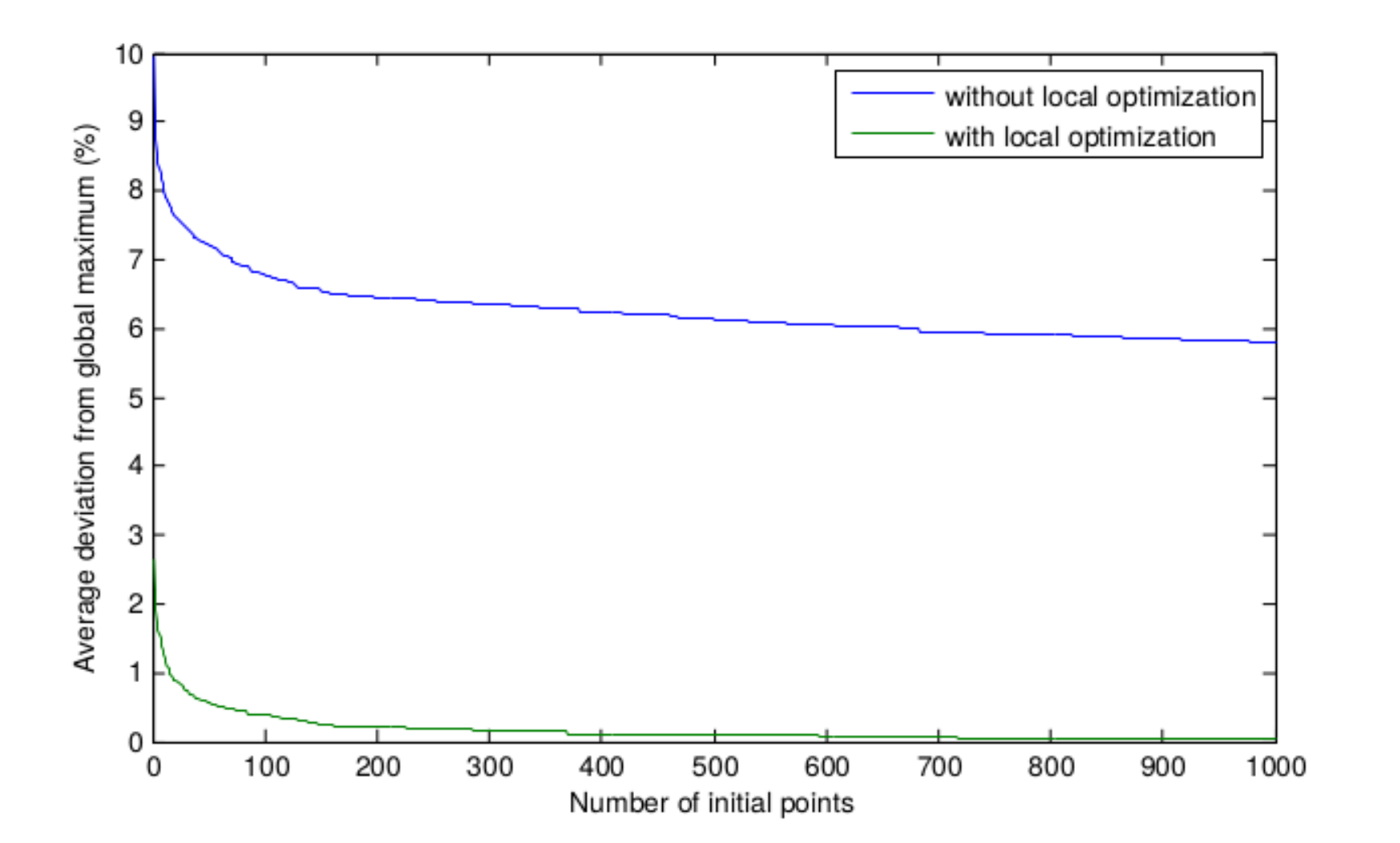}
	\caption{Average relative deviations from optimal solution for given sample of initial points with and without local optimization.}
	\label{fig:avg-deviations-samples}
\end{figure}
\begin{figure}[h!]
	\centering
	\includegraphics[scale=0.7]{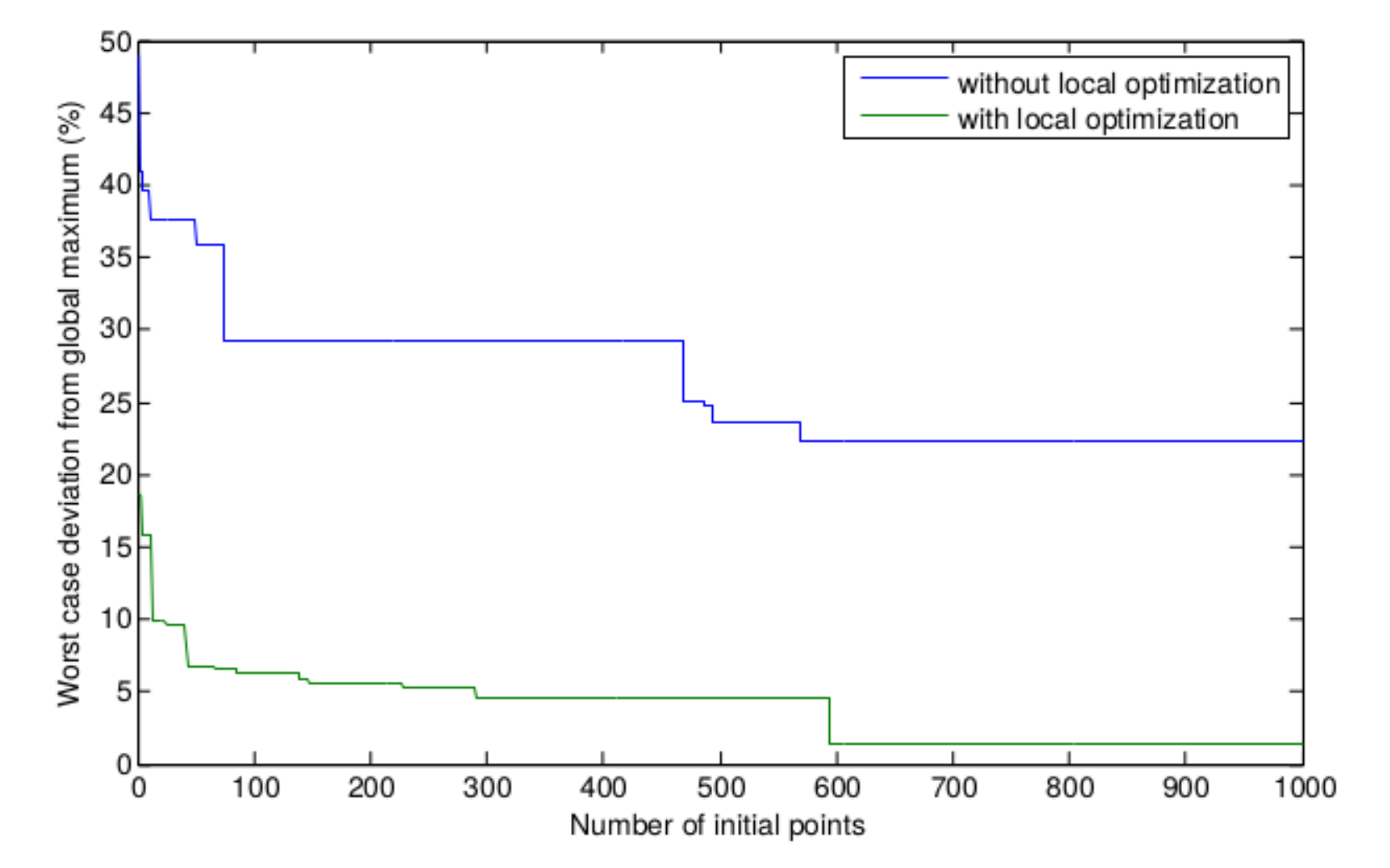}
	\caption{Maximal relative deviations from optimal solution for given sample of initial points with and without local optimization.}
	\label{fig:max-deviations-samples}
\end{figure}
We have generated a sample of 300 parameter sets for interval weighted graphs with 8 vertices  and the same number of time steps. All $q$ and $f$ were non-negative to ensure non-negativity of the resulting optima and therefore comparison of relative deviations. Both interval weights and widths were generated as exponentially distributed random values. For each parameter set a sample of 2500 randomly generated initial points was generated and then optimized to get the corresponding local extrema. We have calculated the values $\langle q, f\rangle^n_\mathbf{w}$ for the initial extremal vector of weights $\mathbf{w}$ and for the optimized weight vector $\mathbf{w}_{\mathrm{opt}}$. In Figure~\ref{fig:avg-deviations-samples} the average relative deviation, in percentage of the best solution, from the most optimal value found is graphed for initial and optimized vector weights depending on the size of the sample. In Figure~\ref{fig:max-deviations-samples} the worst case (maximal) relative deviation is graphed depending on the sample size. It can be clearly observed that locally optimized solutions by far outperform best solutions that might be found by randomly generated extremal weights only. Secondly, we can also observe that even if it cannot be guaranteed that best solution has been found among locally optimized solutions, the deviations become reasonable at not too big sample sizes.

\section{Conclusion and further work}
As far as we are aware this paper is a first attempt to model random walks on weighted graphs with interval weights, and also reversible imprecise Markov chains. We have addressed the most basic question about calculating transition probabilities for multiple time steps. This is a non-convex problem whose computational complexity grows exponentially with the number of time steps. Thanks to the local optimization algorithm we can find reasonable approximations of global optima with a tractable amount of computation. 

Our approach only works in the case where marginals are known precisely. It is therefore natural to try to extend calculations without the assumption of fixed sum of weights and even to allow more general sets of weight functions besides those which are described in terms of intervals. While both possibilities would be plausible, computational complexity may be an obstacle to prevent their efficient analysis. 

Regarding long term distribution our model is a simple one with known limit distribution. Although there are still several relevant questions to be addressed, such as computing mixing times, cover and hitting times. 

\bibliography{refer.bib}

\begin{thebibliography}{}

\bibitem[\protect\citename{Abe, }2014]{abe2014}
Abe, Yoshihiro. 2014.
\newblock Cover times for sequences of reversible {M}arkov chains on random
  graphs.
\newblock {\em Kyoto J. Math.}, {\bf 54}(3), 555--576.

\bibitem[\protect\citename{Aissi {\em et~al.\ }\relax, }2009]{aissi2009min}
Aissi, Hassene, Bazgan, Cristina, \& Vanderpooten, Daniel. 2009.
\newblock Min--max and min--max regret versions of combinatorial optimization
  problems: A survey.
\newblock {\em European journal of operational research}, {\bf 197}(2),
  427--438.

\bibitem[\protect\citename{Aldous \& Fill, }2002]{aldous-fill-2014}
Aldous, David, \& Fill, James~Allen. 2002.
\newblock {\em Reversible {M}arkov Chains and Random Walks on Graphs}.
\newblock Unfinished monograph, recompiled 2014, available at
  \url{http://www.stat.berkeley.edu/~aldous/RWG/book.html}.

\bibitem[\protect\citename{Aldous, }1989]{aldous1989lower}
Aldous, David~J. 1989.
\newblock Lower bounds for covering times for reversible {M}arkov chains and
  random walks on graphs.
\newblock {\em Journal of Theoretical Probability}, {\bf 2}(1), 91--100.

\bibitem[\protect\citename{Augustin {\em et~al.\ }\relax,
  }2014]{augustin2014introduction}
Augustin, Thomas, Coolen, Frank~PA, de~Cooman, Gert, \& Troffaes, Matthias~CM.
  2014.
\newblock {\em Introduction to imprecise probabilities}.
\newblock John Wiley \& Sons.

\bibitem[\protect\citename{Averbakh \& Lebedev, }2004]{averbakh2004interval}
Averbakh, Igor, \& Lebedev, Vasilij. 2004.
\newblock Interval data minmax regret network optimization problems.
\newblock {\em Discrete Applied Mathematics}, {\bf 138}(3), 289--301.

\bibitem[\protect\citename{Backstrom \& Leskovec,
  }2011]{backstrom2011supervised}
Backstrom, Lars, \& Leskovec, Jure. 2011.
\newblock Supervised random walks: predicting and recommending links in social
  networks.
\newblock {\em Pages  635--644 of:} {\em Proceedings of the fourth ACM
  international conference on Web search and data mining}.
\newblock ACM.

\bibitem[\protect\citename{Coppersmith {\em et~al.\ }\relax,
  }1993]{coppersmith1993random}
Coppersmith, Don, Doyle, Peter, Raghavan, Prabhakar, \& Snir, Marc. 1993.
\newblock Random walks on weighted graphs and applications to on-line
  algorithms.
\newblock {\em Journal of the ACM (JACM)}, {\bf 40}(3), 421--453.

\bibitem[\protect\citename{de~Cooman {\em et~al.\ }\relax,
  }2009]{decooman-2008-a}
de~Cooman, Gert, Hermans, Filip, \& Quaeghebeur, Erik. 2009.
\newblock Imprecise {M}arkov chains and their limit behavior.
\newblock {\em Probability in the Engineering and Informational Sciences}, {\bf
  23}(4), 597--635.

\bibitem[\protect\citename{Feige, }1995a]{Feige1995}
Feige, Uriel. 1995a.
\newblock A tight lower bound on the cover time for random walks on graphs.
\newblock {\em Random Structures and Algorithms}, {\bf 6}(4), 433--438.

\bibitem[\protect\citename{Feige, }1995b]{feige1995tight}
Feige, Uriel. 1995b.
\newblock A tight upper bound on the cover time for random walks on graphs.
\newblock {\em Random Structures and Algorithms}, {\bf 6}(1), 51--54.

\bibitem[\protect\citename{Fouss {\em et~al.\ }\relax, }2007]{fouss2007random}
Fouss, Francois, Pirotte, Alain, Renders, Jean-Michel, \& Saerens, Marco. 2007.
\newblock Random-walk computation of similarities between nodes of a graph with
  application to collaborative recommendation.
\newblock {\em Knowledge and data engineering, {IEEE} transactions on}, {\bf
  19}(3), 355--369.

\bibitem[\protect\citename{G{\"o}bel \& Jagers, }1974]{gobel1974random}
G{\"o}bel, F, \& Jagers, AA. 1974.
\newblock Random walks on graphs.
\newblock {\em Stochastic processes and their applications}, {\bf 2}(4),
  311--336.

\bibitem[\protect\citename{Green, }1995]{green1995reversible}
Green, Peter~J. 1995.
\newblock Reversible jump {M}arkov chain {M}onte {C}arlo computation and
  Bayesian model determination.
\newblock {\em Biometrika}, {\bf 82}(4), 711--732.

\bibitem[\protect\citename{Hartfiel, }1998]{hart:98}
Hartfiel, Darald~J. 1998.
\newblock {\em {M}arkov Set-Chains}.
\newblock Berlin: Springer-Verlag.

\bibitem[\protect\citename{Hastings, }1970]{hastings1970monte}
Hastings, W~Keith. 1970.
\newblock {M}onte {C}arlo sampling methods using {M}arkov chains and their
  applications.
\newblock {\em Biometrika}, {\bf 57}(1), 97--109.

\bibitem[\protect\citename{Jerrum \& Sinclair, }1996]{jerrum1996markov}
Jerrum, Mark, \& Sinclair, Alistair. 1996.
\newblock The {M}arkov chain {M}onte {C}arlo method: an approach to approximate
  counting and integration.
\newblock {\em Approximation algorithms for NP-hard problems},  482--520.

\bibitem[\protect\citename{Li {\em et~al.\ }\relax, }2011]{li2011link}
Li, Rong-Hua, Yu, Jeffrey~Xu, \& Liu, Jianquan. 2011.
\newblock Link prediction: the power of maximal entropy random walk.
\newblock {\em Pages  1147--1156 of:} {\em Proceedings of the 20th ACM
  international conference on Information and knowledge management}.
\newblock ACM.

\bibitem[\protect\citename{Lin \& Zhang, }2014]{lin2014mean}
Lin, Yuan, \& Zhang, Zhongzhi. 2014.
\newblock Mean first-passage time for maximal-entropy random walks in complex
  networks.
\newblock {\em Scientific reports}, {\bf 4}.

\bibitem[\protect\citename{Liu \& Milios, }2012]{liu2012probabilistic}
Liu, Hongyu, \& Milios, Evangelos. 2012.
\newblock Probabilistic models for focused web crawling.
\newblock {\em Computational Intelligence}, {\bf 28}(3), 289--328.

\bibitem[\protect\citename{Lov{\'a}sz, }1993]{lovasz1993random}
Lov{\'a}sz, L{\'a}szl{\'o}. 1993.
\newblock Random walks on graphs: A survey.
\newblock {\em Combinatorics, Paul erdos is eighty}, {\bf 2}(1), 1--46.

\bibitem[\protect\citename{Miranda, }2008]{Miranda2008}
Miranda, Enrique. 2008.
\newblock A survey of the theory of coherent lower previsions.
\newblock {\em International Journal of Approximate Reasoning}, {\bf 48}(2),
  628 -- 658.
\newblock In Memory of Philippe Smets (1938-2005).

\bibitem[\protect\citename{Pons \& Latapy, }2005]{raey}
Pons, Pascal, \& Latapy, Matthieu. 2005.
\newblock Computing Communities in Large Networks Using Random Walks.
\newblock {\em Pages  284--293 of:} Yolum, pInar, Güngör, Tunga, Gürgen,
  Fikret, \& Özturan, Can (eds), {\em Computer and Information Sciences -
  ISCIS 2005}.
\newblock Lecture Notes in Computer Science, vol. 3733.
\newblock Springer Berlin Heidelberg.

\bibitem[\protect\citename{Puterman, }2014]{puterman2014markov}
Puterman, Martin~L. 2014.
\newblock {\em {{M}arkov} decision processes: discrete stochastic dynamic
  programming}.
\newblock John Wiley \& Sons.

\bibitem[\protect\citename{\v{S}kulj, }2009]{skulj:09}
\v{S}kulj, Damjan. 2009.
\newblock Discrete time {M}arkov chains with interval probabilities.
\newblock {\em International Journal of Approximate Reasoning}, {\bf 50}(8),
  1314--1329.

\bibitem[\protect\citename{\v{S}kulj, }2015]{skulj2015165}
\v{S}kulj, Damjan. 2015.
\newblock Efficient computation of the bounds of continuous time imprecise
  {M}arkov chains.
\newblock {\em Applied Mathematics and Computation}, {\bf 250}(0), 165 -- 180.

\bibitem[\protect\citename{\v{S}kulj \& Hable, }2013]{2013:skulj-hable}
\v{S}kulj, Damjan, \& Hable, Robert. 2013.
\newblock Coefficients of ergodicity for {M}arkov chains with uncertain
  parameters.
\newblock {\em Metrika}, {\bf 76}(1), 107--133.

\bibitem[\protect\citename{Yin {\em et~al.\ }\relax, }2010]{yin2010unified}
Yin, Zhijun, Gupta, Manish, Weninger, Tim, \& Han, Jiawei. 2010.
\newblock A unified framework for link recommendation using random walks.
\newblock {\em Pages  152--159 of:} {\em Advances in Social Networks Analysis
  and Mining (ASONAM), 2010 International Conference on}.
\newblock IEEE.

\bibitem[\protect\citename{Zhang {\em et~al.\ }\relax, }2014]{zhang2014effects}
Zhang, Zhongzhi, Li, Huan, \& Sheng, Yibin. 2014.
\newblock Effects of reciprocity on random walks in weighted networks.
\newblock {\em Scientific reports}, {\bf 4}.

\end{thebibliography}

\end{document}